\theoremstyle{cited}
\newtheorem{teor}{Theorem}[section]
\newtheorem{lem}[teor]{Lemma}
\newtheorem{prop}[teor]{Proposition}
\theoremstyle{definition}
\newtheorem{deft}[teor]{Definition}
\theoremstyle{remark}
\newtheorem{oss}[teor]{Remark}
\newcommand{\C}{\mathbb{C}}
\newcommand{\R}{\mathbb{R}}
\newcommand{\N}{\mathbb{N}}
\newcommand{\DD}{\boldsymbol\Delta}
\newcommand{\vol}{\textup{Vol}}
\title[Rigidity at infinity for lattices]{Rigidity at infinity for lattices in rank-one lie groups}
\author{Alessio Savini}
\begin{document}

\maketitle

\begin{abstract}
Let $\Gamma$ be a non-uniform lattice in $PU(p,1)$ without torsion and with $p\geq2 $. By following the approach developed in~\cite{franc06:articolo}, we introduce the notion of volume for a representation $\rho:\Gamma \rightarrow PU(m,1)$ where $m \geq p$. We use this notion to generalize the Mostow--Prasad rigidity theorem. More precisely, we show that given a sequence of representations $\rho_n:\Gamma \rightarrow PU(m,1)$ such that $\lim_{n \to \infty} \textup{Vol}(\rho_n) =\textup{Vol}(M)$, then there must exist a sequence of elements $g_n \in PU(m,1)$ such that the representations $g_n \circ \rho_n \circ g_n^{-1}$ converge to a reducible representation $\rho_\infty$ which preserves a totally geodesic copy of $\mathbb{H}^p_\C$ and whose $\mathbb{H}^p_\C$-component is conjugated to the standard lattice embedding $i:\Gamma \rightarrow PU(p,1) < PU(m,1)$. Additionally, we show that the same definitions and results can be adapted when $\Gamma$ is a non-uniform lattice in $PSp(p,1)$ without torsion and for representations $\rho:\Gamma \rightarrow PSp(m,1)$, still mantaining the hypothesis $m \geq p \geq 2$. 
\end{abstract}

\vspace{20pt}


\section{Introduction}

One of the most celebrated theorem about rigidity of lattices in semisimple Lie groups is the Mostow rigidity theorem which states that real hyperbolic lattices are strongly rigid. More precisely, let $\Gamma_1$ and $\Gamma_2$ in $PO^\circ(p,1)=\textup{Isom}^+(\mathbb{H}^p_\R)$ be two non-uniform lattices without torsion. If $\Gamma_1$ is isomorphic to $\Gamma_2$, there exists $g \in PO(p,1)$ such that $\Gamma_2=g\Gamma_1g^{-1}$. Equivalently the $p$-dimensional complete hyperbolic manifolds $M_1=\Gamma_1\backslash \mathbb{H}^p_\R$ and $M_2=\Gamma_2\backslash \mathbb{H}^p_\R$ are isometric. Here $\mathbb{H}^p_\R$ denotes the real hyperbolic space of dimension $p$.

If we assume $\Gamma < PO^\circ(p,1)$ and we look at representations $\rho:\Gamma \rightarrow PO(m,1)$, the result above may be strengthened by introducing the notion of volume for representations $\rho:\Gamma \rightarrow PO(m,1)$, where $m \geq p \geq 3$. For $m=p$ a way to define the volume of a representation $\rho$ is based on the properties of the bounded cohomology of the group $PO^\circ(p,1)$. In~\cite{bucher2:articolo} the authors prove that the
volume class $\omega_p$ is a generator for the cohomology group $H^p_{cb}(PO^\circ(p,1))$, hence, starting from it, we can construct a class in $H^p_b(\Gamma)$ by pulling back $\omega_p$ along $\rho_b^*$ and then evaluate this class with a relative fundamental class $[N,\partial N] \in H^3(N,\partial N)$ via the Kronecker
pairing. Here $N$ is any compact core of $M=\Gamma \backslash \mathbb{H}^p_\R$. 

Generalizing~\cite{bucher2:articolo}, in ~\cite{franc06:articolo} the authors define the volume for representations $\rho:\Gamma \rightarrow PO(m,1)$ when $m \geq p$ by considering the infimum all over the volumes $\textup{Vol}(f)$, where $f:\mathbb{H}^p_\R \rightarrow \mathbb{H}^m_\R$ is a properly ending smooth $\rho$-equivariant map. Intuitively a map $f$ properly ends if it respects the action of the peripheral subgroups. The existence of such a map is proved in~\cite{franc06:articolo}.

For any representation $\rho$ the volume is invariant by conjugation and hence we have a well-defined function on the character variety $X(\Gamma,PO(m,1))$ which is continuous with respect to the topology of pointwise convergence. This function is rigid, that is it satisfies $\vol(\rho)\leq \vol(M)$ and, if equality holds, $\rho$ preserves a totally geodesic copy of $\mathbb{H}^p_\R$ and its $\mathbb{H}^p_\R$-component is conjugated to the standard lattice embedding $i:\Gamma \rightarrow PO(p,1) < PO(m,1)$.

By generalizing both~\cite{culler:articolo} and~\cite{morgan:articolo}, in~\cite{morgan2:articolo} J. Morgan proposed a compactification of the variety $X(\Gamma,PO(m,1))$ whose ideal points can be interpreted as projective lenght functions of isometric $\Gamma$-actions on real trees. We call this compactification the Morgan--Shalen compactification of $X(\Gamma,PO(m,1))$. It is natural to ask if there exists a way to extend continuously the volume function to this compactification and which are the possible values attained at any ideal point. For instance, one could ask if it is possible to extend the ridigity of the volume function also at ideal points. In~\cite{savini:articolo} the authors answered to this question by proving a rigidity result which we may call strong rigidity at infinity. Let $\Gamma$ be as above and let $\rho_n:\Gamma \rightarrow PO(m,1)$ be a sequence of representations such that $\lim_{n \to \infty} \textup{Vol}(\rho_n)=\textup{Vol}(M)$. There must exist a sequence of elements $g_n \in PO(m,1)$ such that the sequence $g_n \circ \rho_n \circ g_n^{-1}$ converges to a representation $\rho_\infty$ which preserves a totally geodesic copy of $\mathbb{H}^p_\R$ and whose $\mathbb{H}^p_\R$-component is conjugated to the standard lattice embedding $i:\Gamma \rightarrow PO(p,1) < PO(m,1)$. As a consequence the sequence of representations cannot diverge to an ideal point of the character variety $X(\Gamma,PO(m,1))$, or equivalently if $\rho_n:\Gamma  \rightarrow PO(m,1)$ is a sequence of representations converging to any ideal point of the Morgan--Shalen compactification of $X(\Gamma,PO(m,1))$, then the sequence of volumes $\textup{Vol}(\rho_n)$ must be bounded from above by $\textup{Vol}(M)-\varepsilon$ with a suitable $\varepsilon>0$.

 The proof of this theorem is based essentially on the properties of the so-called BCG--natural map associated to a non-elementary representation $\rho:\Gamma \rightarrow PO(m,1)$, described in~\cite{besson95:articolo},~\cite{besson96:articolo} and~\cite{besson99:articolo}. Given such a representation there exists a map $F:\mathbb{H}^p_\R \rightarrow \mathbb{H}^m_\R$ which is equivariant with respect to $\rho$, smooth and satisfies $Jac_p(F) \leq 1$ for every $x \in \mathbb{H}^p_\R$, where $Jac_p(F)$ is the $p$-Jacobian of the map $F$. Moreover, the equality holds if and only if $D_xF$ is an isometry.

However the construction of the BCG-natural map is much more general. Let $\Gamma$ be a non-uniform lattice in $G_p$ without torsion, with either $G_p=PU(p,1)$ or $G_p=PSp(p,1)$. We say that the lattice $\Gamma$ is complex in the former case, quaternionic in the latter. Given a representation of $\rho:\Gamma \rightarrow G_m$, where $G_m=PU(m,1)$ if $\Gamma $ is complex or $G_m=PSp(m,1)$ if $\Gamma$ is quaternionic, we can adapt the procedure described by both~\cite{besson99:articolo} and~\cite{franc09:articolo} to obtain a natural map which satisfies the same properties listed previously. 

The prove the strong rigidity at infinity in both the complex and the quaternionic case we introduce the notion of volume for representations $\rho:\Gamma \rightarrow G_m$, with $m \geq p$. For uniform complex lattices the definition of volume for representations $\rho:\Gamma \rightarrow PU(m,1)$ is given both by~\cite{besson99:articolo} and by~\cite{besson07:articolo}, whereas for non-uniform complex lattices we refer to~\cite{burger4:articolo} and to~\cite{koziarz:articolo}. We find another interesting approach in~\cite{kim2:articolo}, where the authors use the pairing between bounded cohomology and $l^1$-Lipschitz homology to define the volume of a representation. However, here we give a different version of volume to adapt this notion to the non compact case, also for quaternionic lattices. By following the strategy of the proof of~\cite[Theorem 1]{savini:articolo} we get

\begin{teor}\label{convergence}
Denote by $G_p$ either $PU(p,1)$ or $PSp(p,1)$ and let $X^p$ the globally symmetric space associated to $G_p$. Let $\Gamma$ be a non-uniform lattice in $G_p$ without torsion. Assume $p \geq 2$. Let $\rho_n:\Gamma \rightarrow G_m$ be a sequence of representations with $m \geq p$. If $\lim_{n \to \infty} \textup{Vol}(\rho_n)=\textup{Vol}(\Gamma \backslash X^p)$, then there must exist a sequence of elements $g_n \in G_m$ such that the sequence $g_n \circ \rho_n \circ g_n^{-1}$ converges to a representation $\rho_\infty$ which preserves a totally geodesic copy of $X^p$ and whose $X^p$-component is conjugated to the standard lattice embedding $i:\Gamma \rightarrow G_p < G_m$.
\end{teor}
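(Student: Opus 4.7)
The plan is to adapt the strategy of~\cite[Theorem 1]{savini:articolo}, replacing the BCG natural map of the real hyperbolic case by its complex or quaternionic analogue, constructed as in~\cite{besson99:articolo} and~\cite{franc09:articolo}. I first argue that $\rho_n$ must be non-elementary for $n$ large, since an elementary representation admits no genuine natural map and would force $\vol(\rho_n)=0$, contradicting the hypothesis. For every such $n$, I associate the natural map $F_n:X^p\to X^m$, which is smooth, $\rho_n$-equivariant, properly ending, and satisfies the pointwise bound $\textup{Jac}_p F_n\leq 1$, with equality at a point $x$ characterising the fact that $D_xF_n$ is an isometry onto a totally geodesic $X^p\subset X^m$. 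Since $F_n$ is one of the properly ending smooth equivariant maps over which $\vol(\rho_n)$ is defined as an infimum, one obtains
$$\vol(\rho_n)\leq \int_{\Gamma\backslash X^p}\textup{Jac}_pF_n\,dV\leq \vol(\Gamma\backslash X^p),$$
and the hypothesis squeezes both inequalities to equalities in the limit, forcing $\textup{Jac}_pF_n\to 1$ in $L^1$ over a fundamental domain.

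To prevent the $F_n$ from escaping to infinity, I fix a basepoint $x_0\in X^p$ and choose $g_n\in G_m$ so that $g_n\cdot F_n(x_0)$ lies in a fixed compact set of $X^m$. After replacing $F_n$ by $g_n\circ F_n$ and $\rho_n$ by $g_n\rho_n g_n^{-1}$, the uniform Lipschitz bound built into the BCG construction lets me apply Arzel\`a--Ascoli and extract a subsequence along which $F_n$ converges to a limit $F_\infty:X^p\to X^m$ uniformly on compact subsets. The equivariance $F_n(\gamma x)=\rho_n(\gamma)F_n(x)$ then pins the representations down: each $\rho_n(\gamma)$ is determined up to bounded ambiguity by its action on $F_n(x_0)$, so along a further subsequence $\rho_n$ converges pointwise to a homomorphism $\rho_\infty:\Gamma\to G_m$ with respect to which $F_\infty$ is equivariant.

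The crucial step is identifying $F_\infty$. The pointwise bound $\textup{Jac}_pF_n\leq 1$ passes to the limit, while the $L^1$-convergence to $1$, combined with the equicontinuity coming from the uniform Lipschitz estimate, upgrades to pointwise equality on $X^p$, so $\textup{Jac}_pF_\infty\equiv 1$. The characterisation of the equality case for natural maps in complex or quaternionic signature (recalled in~\cite{besson99:articolo} and~\cite{franc09:articolo}) then forces $F_\infty$ to be an isometric totally geodesic embedding of $X^p$ into $X^m$, and its equivariance makes $\rho_\infty$ preserve the image. Restricted to this totally geodesic copy, $\rho_\infty$ descends to a homomorphism $\Gamma\to G_p$ whose image has covolume $\vol(\Gamma\backslash X^p)$, and Mostow--Prasad rigidity inside $G_p$ concludes that the $X^p$-component of $\rho_\infty$ is conjugate to the standard lattice embedding $i:\Gamma\to G_p<G_m$.

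The main obstacle will be the behaviour near the cusps. Passing from $L^1$-convergence of $\textup{Jac}_pF_n$ on the non-compact quotient $\Gamma\backslash X^p$ to a pointwise statement requires uniform estimates on $F_n$ in the cuspidal ends, and guaranteeing that the limit map $F_\infty$ itself is properly ending---so that the covolume realised inside the totally geodesic copy really equals $\vol(\Gamma\backslash X^p)$ and Mostow--Prasad can be invoked---is the most delicate ingredient. A secondary subtlety, specific to the complex and quaternionic cases, is that the equality case for the Jacobian estimate is formulated through the Hermitian, respectively quaternionic-K\"ahler, structure of $X^p$ and $X^m$; some care is needed to check that the identification of $F_\infty$ as an isometric totally geodesic embedding transposes verbatim from the real hyperbolic argument of~\cite{savini:articolo} to $PU(m,1)$ and $PSp(m,1)$.
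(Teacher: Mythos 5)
Your overall architecture matches the paper's in outline (non-elementarity, natural maps, normalising $F_n(O)$, squeezing the Jacobian to $1$, extracting a convergent subsequence of representations), but there is a genuine gap at the step you dismiss in one clause: the ``uniform Lipschitz bound built into the BCG construction'' does not exist. The only a priori pointwise estimate the natural map carries is $Jac_k(F_n)\leq 1$, and a bound on the $k$-Jacobian does not bound $\|D_xF_n\|$: the differential can expand strongly in one direction while contracting in another and still have $k$-volume distortion at most $1$. Producing a locally uniform bound on $\|D_xF_n\|$ is precisely the technical heart of the paper. It requires (i) showing that $H_n^V\to I/k$ almost everywhere, which is not automatic from $\varphi(H_n^V)$ approaching its supremum because $Sym^+_1(k,\R)$ is non-compact --- the paper's Proposition~\ref{maximum} carries out a boundary analysis of the auxiliary function $\psi$ on the simplex of eigenvalues, with a separate treatment of the vertices and of the borderline case $k=d+2$ (i.e.\ $\mathbb{H}^2_\C$); (ii) upgrading this to uniform convergence on compact sets; and (iii) feeding it into the Cauchy--Schwarz inequality obtained by differentiating the implicit barycentre equation~(\ref{implicit}) to finally bound $\|D_xF_n\|$ (Lemma~\ref{bound}). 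None of this is ``built in'', and without it Arzel\`a--Ascoli cannot be applied and the translation lengths $\ell_{X^m}(\rho_n(\gamma))$ cannot be controlled.

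Two further points. First, your inequality $\vol(\rho_n)\leq\int Jac_kF_n$ is justified by claiming $F_n$ is properly ending; the paper explicitly notes that the natural map is \emph{not} a priori properly ending, and obtains this inequality by passing through the properly ending $\varepsilon$-natural maps $F_n^\varepsilon$ and letting $\varepsilon\to0$ with dominated convergence (Lemma~\ref{almost_everywhere}). This is fixable but must be said. Second, your endgame (build a limit map $F_\infty$ with $Jac_k F_\infty\equiv1$ everywhere, including control at the cusps) is harder than necessary and you correctly flag it as delicate; the paper avoids it entirely by showing the traces $|\textup{Tr}(\rho_n(\gamma))|$ are bounded, hence the sequence converges in the character variety to some $\rho_\infty$, and then invoking continuity of the volume function together with the already-established volume rigidity theorem to identify $\rho_\infty$. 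You should adopt that route rather than trying to promote $L^1$-convergence of the Jacobian to pointwise equality on all of $X^p$.
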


We want to stress that the quaternionic case could be obtained as a direct consequence of~\cite{corlette:articolo}, but we reported it here for sake of completeness and to show how we can get it using natural maps. 

The first section is dedicated to preliminary definitions. Let $G_p$ and $X^p$ be as in the theorem above. We briefly recall the notion of barycentre of a positive Borel measure on $\partial_\infty X^p$ and the definition of natural map $F$ associated to a non-elementary representation $\rho:\Gamma \rightarrow G_m$. We introduce the definition of volume for representations $\rho:\Gamma \rightarrow G_m$ and we compare it with the volume of the $\varepsilon$-natural maps $F^\varepsilon$. These maps are smooth, $\rho$-equavariant and converge to $F$ with respect to the $C^1$-topology, as in~\cite{franc06:articolo}. The second section is devoted to the proof of the main theorem. We conclude with some comments and remarks about the main results. 

\medskip

\textbf{Acknowledgements}: I would like to thank Marc Burger, Alessandra Iozzi and Michelle Bucher for the precious help and the enlightening conversations. I am also very grateful to Stefano Francaviglia for having introduced me to the study of BCG natural maps.  \\
I finally thank the referee for all the comments and suggestions.


\section{Preliminary definitions}

\subsection{Barycentre of a measure}

We start by fixing some notation. Let $G_p$ be either $PU(p,1)$ or $PSp(p,1)$. Denote by $\mathfrak{g}_p=T_eG_p$ the tangent space to $G_p$ at the neutral element. If we endow $\mathfrak{g }_p$ with its natural structure of Lie algebra, we recall that $\mathfrak{g}_p$ admits an involution $\Theta:\mathfrak{g}_p \rightarrow \mathfrak{g}_p$ which allows us to decompose $\mathfrak{g}_p=\mathfrak{l} \oplus \mathfrak{p}$, where $\mathfrak{l}$ and $\mathfrak{p}$ are the eigenspaces with respect to $1$ and $-1$ of the involution $\Theta$. Moreover $\mathfrak{p}$ is naturally identified with the tangent space at the point fixed by $\textup{exp}(\mathfrak{l})$ of the symmetric space $X^p$ associated to $G_p$ and the restriction of the Killing form to $\mathfrak{p}$ is positive definite. Via the group action we can transport this scalar product to any other tangent space of $X^p$ and hence we get in a canonical way a Riemannian metric on $X^p$. If $G_p=PU(p,1)$ then the associated symmetric space $X^p$ is the complex hyperbolic space of rank $p$, namely $\mathbb{H}^p_\C$, whereas if $G_p=PSp(p,1)$ the symmetric space $X^p$ coincides with the quaternionic hyperbolic space of rank $p$, that is $\mathbb{H}^p_\mathbb{H}$. In both cases we recall that the sectional curvature of these spaces lies between $-4$ and $-1$. In particular, since $X^p$ is negatively curved, we can talk about the visual boundary of $X^p$ and we denote it by $\partial_\infty X^p$. 

From now until the end of the paper we are going to fix a point in $X^p$ as basepoint and we are going to denote it by $O$. Moreover, we will use the same letter $O$ to denote basepoints in symmetric spaces of different dimension. Let $B_O(x,\theta)$ be the Busemann function of $X^p$ normalized at $O$, that means for every $\theta \in \partial_\infty X^p$ we set

\[
B_O(x,\theta)=\lim_{t \to \infty} d(x,c(t))-t,
\]
where $c$ is the geodesic ray starting at $O=c(0)$ and ending at $\theta=c(\infty)$.

Let $\beta$ be a positive probability measure on $\partial_\infty X^p$. Thanks to the convexity of Busemann functions the map

\[
\varphi_\beta: X^p \rightarrow \R, \hspace{10pt} \varphi_\beta(y):=\int_{\partial_\infty X^p} B_O(y,\theta)d\beta(\theta)
\]
is stricly convex, if we assume that $\beta$ is not the sum of two Dirac measures. Additionally, if the measure $\beta$ does not contain any atom of mass greater or equal than $1/2$, the following condition holds 

\[
\lim_{y \to \partial_\infty X^p} \varphi_\beta(y)=\infty.
\]
and this implies that $\varphi_\beta$ admits a unique minimum in $X^p$ (see~\cite[Appendix A]{besson95:articolo}).
On the other hand, if $\beta$ contains an atom of mass at least $1/2$, then it is easy to check that the minimum of $\varphi_\beta$ is $-\infty$ and it is attained when $y$ coincides with the atom. 

\begin{deft}
Let $\beta$ be any positive probability measure on the visual boundary $\partial_\infty X^p$ which is not the sum of two Dirac masses with the same weight. If $\beta$ contains an atom $x$ of mass greater or equal than $1/2$ then we define its \textit{barycentre} as

\[
\textup{bar}_\mathcal{B}(\beta)=x,
\]
otherwise we define it as the point

\[
\textup{bar}_\mathcal{B}(\beta)=\textup{argmin}(\varphi_\beta).
\]

The letter $\mathcal{B}$ emphasizes the dependence of the construction on the Busemann functions. The barycentre of $\beta$ will be a point in $\overline{X}^p$ which satisfies the following properties:

\begin{itemize}
	\item it is continuous with respect to the $\textup{weak-}^*$ topology on the set of probability measures on $\partial_\infty X^p$, that is if $\beta_n \to \beta$ in the
          $\textup{weak-}^*$ topology (and no measure is the sum of two atoms with equal weight)
           it holds
	
	\[
	\lim_{n \to \infty} \textup{bar}_\mathcal{B}(\beta_n)=\textup{bar}_\mathcal{B}(\beta)
	\]

	\item it is $G_p$-equivariant, indeed for every $g \in
          G_p$ (if $\beta$ is not the sum of two equal atoms) we have

	\[
	\textup{bar}_\mathcal{B}(g_*\beta)=g(\textup{bar}_\mathcal{B}(\beta)),
	\]
	
	\item when $\beta$ does not contain any atom of weight greater or equal than $1/2$, it is characterized by the following equation

          \begin{equation}\label{dadiff}
	\int_{\partial_\infty X^p} dB_O|_{(\textup{bar}_\mathcal{B}(\beta),y)}(\cdot)d\beta(y)=0.
          \end{equation}

We refer to~\cite[Appendix A]{besson95:articolo} for the equation above.

\end{itemize}

\end{deft}

\subsection{The Patterson-Sullivan family of measures and the BCG--natural map}

For more details about the following definitions and constructions we recomend the reader see ~\cite{besson95:articolo}, ~\cite{besson99:articolo} and ~\cite{franc09:articolo}. Even if the last one refers only to real hyperbolic lattices, every definition that appears can be easily adapted in our context, as we will see. Before starting, fix $k=2p$ if $G_p=PU(p,1)$ and $k=4p$ if $G_p=PSp(p,1)$. The value $k$ is simply the real dimension of the symmetric space $X^p$ associated to $G_p$.

Let $\Gamma$ be a non-uniform lattice in $G_p$ without torsion, that is a discrete subgroup of $G_p$ which can be thought of as the fundamental group a complete manifold $M = X^p/\Gamma$ with finite volume and which is not compact. We say that $\Gamma$ is a complex lattice if $\Gamma < PU(p,1)$, whereas we call it quaternionic if $\Gamma < PSp(p,1)$.

\begin{deft}
The \textit{critical exponent} associated to the lattice $\Gamma$ is defined as

\[
\delta(\Gamma)=\inf \{ s \in [0,\infty]| \sum_{\gamma \in\Gamma} e^{-sd(x,\gamma x)} < \infty\}
\]
for any point $x \in X^p$. The definition above does not depend on the choice of the basepoint $x \in X^p$ used to compute the series. 
\end{deft}

When $\Gamma$ is a non-uniform lattice in $G_p$, the critical exponent is always finite and by~\cite[Theorem 2]{albuquerque:articolo} we have that $\delta(\Gamma)=k+d-2$. The number $d$ is the real dimension of the algebra on which the hyperbolic space $X^p$ is defined. Hence we have either $d=2$ if $\Gamma$ is complex or $d=4$ if $\Gamma$ is quaternionic. Moreover, we remind the reader that for $s=\delta(\Gamma)$ the series diverges by~\cite[Proposition D]{albuquerque2:articolo}, that is

\[
\sum_{\gamma \in \Gamma} e^{-\delta(\Gamma)d(x,\gamma x)}=+\infty.
\]
and for this reason we may refer to $\Gamma$ as a lattice of divergence type. 

\begin{deft}
Let $\mathcal{M}^1(Y)$ be the set of positive probability measures on a space $Y$. The \textit{family of Patterson-Sullivan measures associated to a non-uniform lattice $\Gamma$} is a family of measures \mbox{$\{\mu_x\} \in \mathcal{M}^1(\partial_\infty X^p)$}, where $x \in X^p$, which satisfies the following properties

\begin{itemize}
	\item the family is $\Gamma$-equivariant, that is $\mu_{\gamma x}=\gamma_*(\mu_x)$ for every $\gamma \in \Gamma$ and every $x \in X^p$,
	\item For every $x,y \in X^p$ it holds 

		\[
			d\mu_x(\theta)=e^{-\delta(\Gamma)B_y(x,\theta)}d\mu_y(\theta)
		\]
	where $B_y(x,\theta)$ is the Busemann function normalized at $y$.
\end{itemize}

\end{deft} 

\begin{oss}
The construction of the family of Patterson-Sullivan measures has been generalized by~\cite{albuquerque:articolo,albuquerque2:articolo} to any lattice in a Lie group $G$ of non-compact type. The support of the measures $\mu_x$ is the Furstenberg boundary $\partial_\mathcal{F} X$ of $X$, which can be thought of as the $G$-orbit of a regular point $\xi \in \partial_\infty X$. Since we are considering rank one Lie group and 

\[
\textup{codim}_{\partial_\infty X} \partial_\mathcal{F}X=\textup{rank}(X)-1
\]
we have that $\partial_\infty X=\partial_\mathcal{F}X$ in our context. 
\end{oss}

Let $\Gamma$ be as above and let $\{ \mu_x \}$ be the family of Patterson-Sullivan measures associated to
$\Gamma$. We set $\mu=\mu_O$.

Let $\rho:\Gamma \rightarrow G_m$ be a non-elementary representation. We are assuming that either $G_m=PU(m,1)$ if $\Gamma $ is complex or $G_m=PSp(m,1)$ if $\Gamma$ is quaternionic. Recall that the action of $\Gamma$ on $(\partial_\infty X^p\times \partial_\infty X^p,\mu \times \mu)$ is ergodic by~\cite{Nic89,Yue96,burger3:articolo,Rob00}, for instance.
Hence by~\cite[Corollary 3.2]{burger3:articolo} there exists a $\rho$-equivariant measurable map $$D:\partial_\infty X^p \rightarrow \partial_\infty X^m$$ and two different maps of this type must agree on a full $\mu$-measure set.

We define

\[
\nu_x:=D_*(\mu_x).
\]

Clearly the measure $\nu_x$ lives in $\mathcal{M}^1(\partial_\infty X^m)$ for every
$x$.

Since we have a non-elementary representation, $\nu_x$ does not contain any atom of mass greater or equal than $1/2$. Indeed the following holds

\begin{lem}
Let $\rho:\Gamma \rightarrow G_m$ be a non-elementary representation and let $D:\partial_\infty X^p \rightarrow \partial_\infty X^m$ be a $\rho$-equivariant measurable map. Then $D(x)\neq D(y)$ for almost every $(x,y) \in \partial_\infty X^p \times \partial_\infty X^p$.
\end{lem}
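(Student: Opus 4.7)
The strategy is to reduce the statement to the ergodicity of the diagonal $\Gamma$-action on the boundary, and use the non-elementarity of $\rho$ to rule out the bad case. Consider the set
\[
E := \{(x,y)\in\partial_\infty X^p\times\partial_\infty X^p \;:\; D(x)=D(y)\}.
\]
First I would verify that $E$ is measurable. This follows from the fact that $D\times D$ is measurable and the diagonal $\Delta\subset \partial_\infty X^m\times\partial_\infty X^m$ is a Borel set (the boundary is a second countable Hausdorff space, so the diagonal is closed). Then I would observe that $E$ is invariant under the diagonal $\Gamma$-action on $\partial_\infty X^p\times\partial_\infty X^p$: indeed, if $(x,y)\in E$, then by $\rho$-equivariance $D(\gamma x)=\rho(\gamma)D(x)=\rho(\gamma)D(y)=D(\gamma y)$, so $(\gamma x,\gamma y)\in E$.

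Next, by the ergodicity of the $\Gamma$-action on $(\partial_\infty X^p\times\partial_\infty X^p,\mu\times\mu)$, recalled just above the statement of the lemma and guaranteed by the references \cite{Nic89,Yue96,burger3:articolo,Rob00}, the set $E$ has either zero or full measure. The claim of the lemma is precisely that $(\mu\times\mu)(E)=0$, so I would proceed by contradiction, assuming $(\mu\times\mu)(E)=1$.

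Under this assumption, Fubini's theorem yields a $\mu$-conull set $A\subset\partial_\infty X^p$ such that for every $x\in A$, one has $D(y)=D(x)$ for $\mu$-a.e. $y$. Fix any $x_0\in A$ and set $\xi_0:=D(x_0)\in\partial_\infty X^m$. Then $D$ equals the constant $\xi_0$ on a set of full $\mu$-measure. Using $\rho$-equivariance, for every $\gamma\in\Gamma$ and for $\mu$-a.e. $x$ one has
\[
\rho(\gamma)\xi_0 = \rho(\gamma)D(x) = D(\gamma x) = \xi_0,
\]
since $\gamma\cdot\mu=e^{-\delta(\Gamma)B_{O}(\gamma^{-1}O,\cdot)}\mu$ is absolutely continuous with respect to $\mu$, so $\mu$-almost every property is preserved under pushforward by $\gamma$. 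Hence $\rho(\Gamma)$ fixes the point $\xi_0\in\partial_\infty X^m$, which contradicts the non-elementarity of $\rho$.

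The only delicate point I expect is the measurability of $E$ and making sure the exceptional null sets behave well under the $\Gamma$-action; both are settled by the quasi-invariance of $\mu$ in the Patterson--Sullivan conformal class, which is precisely the second defining property of $\{\mu_x\}$. Everything else is a direct consequence of ergodicity and equivariance.
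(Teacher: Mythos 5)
Your proof is correct and follows essentially the same route as the paper: ergodicity of the diagonal $\Gamma$-action makes the coincidence set null or conull, and in the conull case one extracts a point of $\partial_\infty X^m$ fixed by $\rho(\Gamma)$, contradicting non-elementarity. The only (harmless) difference is bookkeeping: you observe that $D$ is a.e.\ constant and apply equivariance to that constant, whereas the paper intersects the countably many $\gamma$-translates of a full-measure slice to find a single $y$ with $\rho(\gamma)D(y)=D(y)$ for all $\gamma$; both steps rest on the same quasi-invariance of $\mu$.
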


\begin{proof}
Define the set $A:=\{ (x,y) \in \partial_\infty X^p \times \partial_\infty
X^p| D(x)=D(y)\}$. Since the map $D$ is $\rho$-equivariant, $A$ is a
$\Gamma$-invariant measurable subset of $\partial_\infty X^p \times \partial_\infty
X^p$. Recall that $\Gamma$ acts ergodically on $\partial_\infty X^p \times \partial_\infty
X^p$ with respect to the measure $\mu\times\mu$. In particular, the set $A$ must have
either null measure or full measure. By contradiction, suppose that $A$ has full measure. This
implies that for almost all $x$, the slice $A(x):=\{ y
\in \partial_\infty X^p|D(x)=D(y)\}$ has full measure in $\partial_\infty
X^p$. The $G_m$-action preserves the class of $\mu$, in particular, for any $\gamma\in
\Gamma$, if $A(x)$ has full measure then so does $\gamma A(x)$.
Since $\Gamma$ is countable, this implies that for almost all $x$, the set
$A_\Gamma(x):=\cap_{\gamma \in \Gamma} \gamma^{-1} A(x)$ has full measure in $\partial_\infty
X^p$. Fix now a point $y \in A_\Gamma(x)$. For any $\gamma\in\Gamma$ we have
$(x,\gamma y)\in A$. In particular  
 
\[
D(y)=D(x)=D(\gamma y)=\rho(\gamma)D(y)
\]
for every $\gamma \in \Gamma$. We use $\gamma=id$ in the first equality and the last follows by equivariance of $D$.

The computation above would imply that $\rho$ is elementary, which is a contradiction. 
\end{proof}

By the previous lemma, for all $x \in X^p$, we can define 

\[
F(x):=\textup{bar}_\mathcal{B}(\nu_x)
\]
and this point will lie in $X^m$. In this way we get a map $F:X^p \rightarrow X^m$. 

\begin{deft}
The map $F:X^p \rightarrow X^m$ is called \textit{natural map} for the
representation $\rho:\Gamma \rightarrow G_m$.

Equation~(\ref{dadiff}) becomes

          \begin{equation}\label{dadiff2}
	\int_{\partial_\infty X^m} dB_O|_{(F(x),y)}(\cdot)d\nu_x(y)=0.
          \end{equation}
and since $\nu_x=D_*(\mu_x)$, it can be rewritten as
          \begin{equation}\label{dadiff3}
	\int_{\partial_\infty X^p} dB_O|_{(F(x),D(z))}(\cdot)d\mu_x(z)=0.
          \end{equation}

 The natural map is smooth and satisfies the following properties:

	\begin{itemize}
		\item Recall that $k$ is the real dimension of the symmetric space $X^p$. Define the $k$-Jacobian of $F$ as

\[
Jac_k(F)(x):=\max_{u_1,\ldots,u_k \in T_xX^p}||D_xF(u_1) \wedge \ldots \wedge D_xF(u_k)||_{X^m}
\]
where $\{ u_1,\ldots,u_k \}$ is an orthonormal frame of the tangent space $T_xX^p$ with respect to the standard metric induced by $g_{X^p}$ and the norm $||\cdot||_{X^m}$ is the norm on $T_{F_n(x)}X^m$ induced by $g_{X^m}$. We have $Jac_k(F)\leq 1$ and the equality holds at $x$ if and only if $D_xF:T_x X^p \rightarrow T_{F(x)}X^m$ is an isometry (see for instance~\cite[Theorem 1.10]{besson99:articolo}).
  
	\item The map $F$ is $\rho$-equivariant, that is $F(\gamma x)=\rho(\gamma)F(x)$.
	\item By differentiating~(\ref{dadiff3}), one gets that for all $x \in X^p$, $u
          \in T_x X^p$, $v \in T_{F(x)}X^m$ it holds that

	\begin{align}\label{implicit}
	&\int_{\partial_\infty X^p} \nabla dB_O|_{(F(x),D(z))}(D_xF(u),v)d\mu_x(z)=\\
	&\delta(\Gamma) \int_{\partial_\infty X^p} dB_O|_{(F(x),D(z))}(v)dB_O|_{(x,z)}(u)d\mu_x(z) \nonumber
	\end{align}

	where $\nabla$ is the Levi--Civita connection on $X^m$. 
	\end{itemize}
\end{deft}

\subsection{Volume of representations and $\varepsilon$-natural maps}\label{epsilon}

Let $\Gamma$ be a non-uniform lattice in $G_p$ without torsion. If we denote by $M=X^p/\Gamma$ we obtain a complete manifold of finite volume which is locally symmetric $X^p$ and not compact. Moreover, as a consequence of the Margulis lemma, it admits a decomposition 

\[
M:=N \cup \bigcup_{i=1}^h C_i
\]
where $N$ is a compact core of finite volume and each $C_i$ is a cuspidal neighborhood which can be seen as $N_i \times (0,\infty)$ where $\pi_1(N_i)$ is a discrete virtually nilpotent parabolic subgroup of $G_p$ (see~\cite{gromov:libro} or ~\cite{bowditch:articolo}).

As before, we set $k=2p$ if $G_p=PU(p,1)$ or $k=4p$ if $G_p=PSp(p,1)$. Let $\rho:\Gamma \rightarrow G_m$ be a representation, with $m \geq p$, and let $f:X^p \rightarrow X^m$ be a smooth $\rho$-equivariant map. By following the definition of~\cite{franc06:articolo} we want to define the volume $\textup{Vol}(f)$. Let $g_{X^m}$ be the natural metric on $X^m$. The pullback of $g_{X^m}$ along $f$ defines in a natural way a pseudo-metric on $X^p$, which can be possibly degenerate, and hence it defines a natural $k$-form given by $\tilde \omega_D=\sqrt{|\det f^*g_{X^m}|}$. The equivariance of $f$ with respect to $\rho$ implies that the form $\tilde \omega_f$ is $\Gamma$-invariant and hence it determines a $k$-form on $M$. Denote this form by $\omega_f$.

\begin{deft}
Let $\rho:\Gamma \rightarrow G_m$ be a representation and let $f:X^p \rightarrow X^m$ be any smooth $\rho$-equivariant map. The \textit{volume} of $f$ is defined as

\[
\textup{Vol}(f):=\int_M \omega_f
\]
\end{deft}

\indent We keep denoting by $f:X^p \rightarrow X^m$ a generic smooth $\rho$-equivariant map. For each cuspidal neighborhood $C_i=N_i \times (0,\infty)$, we know that $\pi_1(N_i)$ is parabolic, so it fixes a unique point in $\partial_\infty X^p$. Suppose $c_i=\textup{Fix}(\pi_1N_i)$ and let $r(t)$ be a geodesic ray ending at $c_i$. We say that $f$ is a \textit{properly ending map} if all the limit points of $f(r(t))$ lie either in $\textup{Fix}(\rho(\pi_1N_i))$ or in a finite union of $\rho(\pi_1N_i)$-invariant geodesics.   

\begin{deft}
Given a representation $\rho:\Gamma \rightarrow G_m$, we define its \textit{volume} as

\[
\textup{Vol}(\rho):=\inf \{ \textup{Vol}(f)| \text{ $f$ is smooth, $\rho$-equivariant and properly ending}\}.
\]
\end{deft}

When $\rho$ is non-elementary, a priori the BCG--natural map \mbox{$F:X^p \rightarrow
 X^m$} associated to $\rho$ is not a properly ending map, hence we cannot compare its
volume with the volume of representation $\rho$. However, by adapting the proofs contained in~\cite{franc06:articolo}, for any $\varepsilon >0$ it is possible to construct a family of smooth functions $F^\varepsilon: X^p \rightarrow
X^m$ that $C^1$-converges to $F$ as $\varepsilon \to 0$ and such that
$F^\varepsilon$ is a properly ending map for every $\varepsilon >0$.
 
\begin{deft}
For any $\varepsilon >0$ there exists a map $F^\varepsilon:X^p \rightarrow X^m$ called $\varepsilon$-\textit{natural map} associated to $\rho$ which satisfies the following properties

\begin{itemize}
	\item $F^\varepsilon$ is smooth and $\rho$-equivariant,
	\item at every point of $X^p$ we have $Jac_k(F^\varepsilon) \leq 1+\varepsilon$,
	\item for every $x \in X^p$ it holds $\lim_{\varepsilon \to 0} F^\varepsilon(x)=F(x)$ and $\lim_{\varepsilon \to 0} D_xF^\varepsilon = D_xF$,
	\item $F^\varepsilon$ is a properly ending map.
\end{itemize}
\end{deft}

The existence of $\varepsilon$-natural maps for real hyperbolic lattices is proved in~\cite[Lemma 4.5]{franc06:articolo}, but the proof can be suitably adapted both to the complex and the quaternionic case. 

\begin{oss}
The properly ending property of $F^\varepsilon$ is guaranteed by the fact that $\pi_1(N_i)$ is parabolic and stabilizes each horosphere through the fixed point $c_i$. 
In particular, since $F^\varepsilon$ is a properly ending map, it holds by definition that

\[
\textup{Vol}(\rho) \leq \int_M \sqrt{|\det((F^\varepsilon)^*g_{X^m})|}.
\]

We are going to use the previous estimate later.  
\end{oss}

The rigidity of volume holds also in this context. Indeed the same proof of~\cite{franc06:articolo} leads us to

\begin{teor}
Denote by $G_p$ either $PU(p,1)$ or $PSp(p,1)$ and let $X^p$ be the globally symmetric space associated to $G_p$. Let $\Gamma$ be a non-uniform lattice in $G_p$ without torsion and assume $p \geq 2$. Let $\rho: \Gamma \rightarrow G_m$ be a representation, where $m\geq p$. Then $\textup{Vol}(\rho) \leq \textup{Vol}(M)$ and equality holds if and only if the representation $\rho$ is a discrete and faithful representation of $\Gamma$ into the isometry group of a totally geodesic copy of $X^p$ contained in $X^m$. 
\end{teor}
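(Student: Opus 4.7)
The plan follows the strategy of \cite{franc06:articolo}, with the real-hyperbolic natural map replaced by the complex/quaternionic natural map $F$ introduced above and its smoothings $F^\varepsilon$. I would first dispose of the elementary case: if $\rho$ is elementary, there is a fixed point or an invariant geodesic of $\rho(\Gamma)$ in $\overline{X}^m$, and a standard retraction argument onto it produces $\rho$-equivariant properly ending smooth maps of arbitrarily small volume, so $\textup{Vol}(\rho)=0<\textup{Vol}(M)$.

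Assume therefore that $\rho$ is non-elementary. Fix $\varepsilon>0$ and consider the $\varepsilon$-natural map $F^\varepsilon:X^p\to X^m$. The $k$-form associated with $F^\varepsilon$ satisfies
\[
\omega_{F^\varepsilon}=Jac_k(F^\varepsilon)\,d\textup{vol}_{X^p}\leq (1+\varepsilon)\,d\textup{vol}_{X^p}
\]
at every point, and since $F^\varepsilon$ is properly ending,
\[
\textup{Vol}(\rho)\leq \int_M \omega_{F^\varepsilon}\leq (1+\varepsilon)\,\textup{Vol}(M).
\]
Letting $\varepsilon\to 0$ yields $\textup{Vol}(\rho)\leq \textup{Vol}(M)$.

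Suppose now $\textup{Vol}(\rho)=\textup{Vol}(M)$. Using the $C^1$-convergence $F^\varepsilon\to F$, the uniform bound $Jac_k(F^\varepsilon)\leq 1+\varepsilon$, and the pointwise bound $Jac_k(F)\leq 1$, I would show $\int_M Jac_k(F)\,d\textup{vol}=\textup{Vol}(M)$, so $Jac_k(F)\equiv 1$ on a set of full measure in a fundamental domain and, by smoothness of $F$, on all of $X^p$. The characterization of equality in $Jac_k(F)\leq 1$ then gives that $D_xF$ is a linear isometry for every $x$, so $F$ is an isometric immersion $X^p\hookrightarrow X^m$. Since both target and source are rank-one symmetric spaces with sectional curvature in $[-4,-1]$ of the same type (complex if $\Gamma$ is complex, quaternionic if $\Gamma$ is quaternionic), a classical rigidity result forces $F(X^p)$ to be a totally geodesic copy of $X^p$ inside $X^m$. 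By $\rho$-equivariance, $\rho(\Gamma)$ preserves $F(X^p)$, and the induced action on $F(X^p)\cong X^p$ is conjugated by $F$ to the deck action of $\Gamma$ on $X^p$, hence the $X^p$-factor of $\rho$ is discrete and faithful.

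The main technical obstacle is the limit step in the equality case: the convergence $F^\varepsilon\to F$ is only $C^1$ pointwise on $X^p$, and $F$ itself is not known to be properly ending, so exchanging the $\varepsilon\to 0$ limit with integration over the non-compact manifold $M$ is delicate. One must control the mass of $\omega_{F^\varepsilon}$ inside each cuspidal neighbourhood $C_i$ uniformly in $\varepsilon$, using the properly ending property to rule out any escape of mass toward the cusps, and then apply dominated convergence on the compact core $N$ to pass the identity $\int_M\omega_{F^\varepsilon}\to \textup{Vol}(M)$ to the statement $\int_M Jac_k(F)\,d\textup{vol}=\textup{Vol}(M)$.
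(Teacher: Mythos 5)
Your overall architecture is the one the paper intends: the paper offers no proof of its own here, deferring to the argument of \cite{franc06:articolo} (and to \cite{burger4:articolo}, \cite{kim2:articolo}, \cite{corlette:articolo}), and your treatment of the elementary case and of the inequality $\vol(\rho)\leq\vol(M)$ via the properly ending maps $F^\varepsilon$ with $Jac_k(F^\varepsilon)\leq 1+\varepsilon$ is exactly right. Incidentally, the step you single out as the ``main technical obstacle'' is not one: since $Jac_k(F^\varepsilon)\leq 1+\varepsilon\leq 2$ and $\vol(M)<\infty$, dominated convergence applies directly on all of $M$ once $D_xF^\varepsilon\to D_xF$ pointwise, with no separate control of the cusps; this is precisely how the paper's Lemma \ref{almost_everywhere} passes from $\vol(\rho_n)\leq\vol(F_n^\varepsilon)$ to $\vol(\rho_n)\leq\int_M Jac_k(F_n)\,d\textup{vol}\leq\vol(M)$.

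The genuine gap is in the equality case, at the step ``$D_xF$ is an isometry for every $x$, hence $F$ is an isometric immersion, hence a classical rigidity result forces $F(X^p)$ to be totally geodesic.'' There is no such classical result at this level of generality: already for real hyperbolic spaces there exist isometric immersions $\mathbb{H}^2_\R\to\mathbb{H}^3_\R$ that are not totally geodesic (the Gauss equation only forces $\det(II)=0$, not $II=0$), so the conclusion cannot follow from the isometric-immersion property alone, and you cite no substitute valid in the complex or quaternionic case. The actual argument in \cite{besson99:articolo} and \cite{franc06:articolo} extracts total geodesicity from the equality case of the Jacobian estimate itself: when $Jac_k(F)(x)=1$, every intermediate Cauchy--Schwarz/H\"older inequality in the chain bounding $\det(K_n^V)\,Jac_k(F)$ must be an equality, and this forces $H^V=I/k$ \emph{and} pins down the support of $\nu_x$ (equivalently, the essential image of the boundary map $D$) so that, together with the differentiated implicit equation (\ref{implicit}), the second fundamental form of $F$ vanishes and $F(X^p)$ is a totally geodesic $X^p$. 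That extra information must be invoked explicitly; it is not recoverable from ``$D_xF$ is an isometry'' a posteriori. Finally, you prove only one direction of the ``if and only if'': the converse requires the lower bound $\vol(f)\geq\vol(M)$ for every properly ending $\rho$-equivariant $f$ when $\rho$ is (conjugate to) the composition of the lattice embedding with a totally geodesic inclusion, which is a nontrivial degree/straightening argument and is precisely where the properly ending hypothesis is essential; it should not be omitted.
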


For complex lattices, this result is exactly the one obtained in~\cite{burger4:articolo} or in~\cite{kim2:articolo}. The statement regarding the quaternionic case is compatible with the result obtained in~\cite{corlette:articolo}.

\section{Proof of the main theorem}

We start by fixing the following setting. 

\begin{itemize}
 
	\item A lattice $\Gamma < G_p$ where $G_p=PU(p,1)$ or $G_p=PSp(p,1)$ so that $M:=X^p/\Gamma$ is
          a (non-compact) complete manifold of finite volume. Recall that $X^p$ is the Riemannian symmetric space associated to $G_p$. Assume $p \geq 2$.
	\item A base-point $O \in X^p$ used to normalize the Busemann function $B_O(x,\theta)$, with $x \in X^p$ and $\theta \in \partial_\infty X^p$.
	\item The family $\{ \mu_x \}$ of Patterson-Sullivan probability measures associated to $\Gamma$. Set $\mu=\mu_O$.
	\item A sequence of representations $\rho_n: \Gamma \rightarrow  G_m$ such that $\lim_{n \to \infty} \textup{Vol}(\rho_n)=\textup{Vol}(M)$.
\end{itemize}

We start observing that the condition $\lim_{n \to \infty}\textup{Vol}(\rho_n) = \textup{Vol}(M)$ implies that, up to passing to a subsequence, we can suppose that no $\rho_n$ is elementary. Indeed elementary representations have zero volume and $\lim_{n \to \infty}\textup{Vol}(\rho_n) = \textup{Vol}(M)$, which is stricly positive. With an abuse of notation we still denote the subsequence of non-elementary representations by $\rho_n$. 

Since no $\rho_n$ is elementary, we can consider the sequence of $\rho_n$-equi-variant measurable maps $D_n: \partial_\infty X^p \rightarrow \partial_\infty X^m$ and the corresponding sequence of BCG--natural maps $F_n:X^p \rightarrow X^m$. Since conjugating $\rho_n$ by $g \in G_m$ reflects in post-composing $F_n$ with $g$, up to conjugating $\rho_n$ by a suitable element $g_n \in G_m$, we can suppose $F_n(O)=O$ for every $n \in \N$. 

The choice to fix the origin of $X^m$ as the image of $F_n(O)$ is made to avoid pathological behaviour. For instance consider a sequence of hyperbolic elements $g_n \in G_m$ which is divergent and define the representations $\rho_n:=g_n \circ i \circ g_n^{-1}$, where $i:\Gamma \rightarrow G_p < G_m$ is the standard lattice embedding. Clearly this sequence of representations satisfies $\lim_{n \to \infty} \textup{Vol}(\rho_n)=\textup{Vol}(M)$, since for every $n \in \N$ we have $\textup{Vol}(\rho_n)=\textup{Vol}(M)$. However, there does not exist any subsequence of $\rho_n$ converging to the holonomy of the manifold $M$. 

\begin{deft}
For any $n \in \N$ and every $x \in X^p$ we can define the following quadratic forms on $T_{F_n(x)}X^m$:

\[
k_n|_{F_n(x)}(u,u):=\langle K_n|_{F_n(x)} u, u \rangle= \int_{\partial_\infty X^p} \nabla dB_O|_{(F_n(x),D_n(\theta))}(u,u)d\mu_x(\theta)
\]

\[
h_n|_{F_n(x)}(u,u):=\langle H_n|_{F_n(x)} u, u \rangle= \int_{\partial_\infty X^p} (dB_O|_{(F_n(x),D_n(\theta))}(u))^2d\mu_x(\theta)
\]
for any $u \in T_{F_n(x)}X^m$. The notation $\langle \cdot , \cdot \rangle$ stands for the scalar product on $T_{F_n(x)}X^m$ induced by the natural metric on $X^m$. Since the rank $m$ is bigger than $p$, we will need to define another quadratic form, this time on $T_x X^p$. For any $v \in T_x X^p$, we define

\[
h'_n|_x(v,v)=\langle H'_n|_x v, v \rangle= \int_{\partial_\infty X^p} (dB_O|_{(x,\theta)}(v))^2d\mu_x(\theta).
\]
\end{deft}

For any quadratic form we are going to drop the subscript which refers to the tangent space on which the form is defined. 
Since 

\[
Jac_k(F_n)(x):=\max_{u_1,\ldots,u_k \in T_xX^p}||D_xF_n(u_1) \wedge \ldots \wedge D_xF_n(u_k)||_{X^m}, 
\]
let $\{ u_1,\ldots,u_k \}$ be any frame which realizes the maximum and denote by $U_x$ the subspace $U_x:=\textup{span}_\R\{u_1,\ldots,u_k\}$ of $T_xX^p$ (since we are working with $k$-tuples, the subspace $U_x$ coincides extacly with $T_xX^p$, but we prefer to mantain the same notation of~\cite{besson99:articolo}). Set $V_{F_n(x)}:=D_xF_n(U_x)$. We denote by $K^V_n(x)$, $H^V_n(x)$ and $H^{'U}_n(x)$ the restrictions of the form $K_n|_{F_n(x)}$, $H_n|_{F_n(x)}$ and $H_n'|_x$ to the  subspace $V_{F_n(x)}$, $V_{F_n(x)}$ and $U_x$, respectively. As consequence of the Cauchy--Schwarz inequality, as in~\cite[Section 2]{besson99:articolo} it follows that

\begin{eqnarray*}
&~&\det(K_n(x)^V)Jac_k(F_n)(x) \\&\leq&  (k+d-2)^k (\det (H^V_n(x)))^\frac{1}{2} (\det (H'^U_n(x)))^\frac{1}{2} \\&\leq&  (k+d-2)^k(\det (H^V_n(x)))^\frac{1}{2}(\textup{Tr}(H'^U_n(x))/k)^\frac{1}{2}\\&\leq& k^{-\frac{k}{2}}(k+d-2)^k(\det (H^V_n(x)))^\frac{1}{2}
\end{eqnarray*}

\begin{lem}\label{almost_everywhere}
Suppose $\lim_{n \to \infty} \textup{Vol}(\rho_n)=\textup{Vol}(M)$. Then we have that $Jac_k(F_n)$ converges to $1$ almost everywhere in $X^p$ with respect to the measure induced by the standard metric.
\end{lem}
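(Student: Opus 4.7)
The plan is to use the $\varepsilon$-natural maps $F_n^\varepsilon$ as admissible competitors in the definition of $\textup{Vol}(\rho_n)$ in order to sandwich the quantity $\int_M Jac_k(F_n)\,d\textup{vol}_M$ between $\textup{Vol}(\rho_n)$ and $\textup{Vol}(M)$. Since the source has top dimension $k$, in any orthonormal frame one has $\sqrt{|\det (f^*g_{X^m})|} = Jac_k(f)$ for a smooth $\rho$-equivariant map $f$, so the inequality recorded in the remark following the definition of the $\varepsilon$-natural maps rewrites as
\[
\textup{Vol}(\rho_n) \;\leq\; \int_M Jac_k(F_n^\varepsilon)\,d\textup{vol}_M.
\]

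Next I would let $\varepsilon \to 0$. The $C^1$-convergence $F_n^\varepsilon \to F_n$ built into the construction gives pointwise convergence $Jac_k(F_n^\varepsilon) \to Jac_k(F_n)$ on $M$, while the uniform bound $Jac_k(F_n^\varepsilon) \leq 1+\varepsilon$ on the finite-volume manifold $M$ allows me to interchange the limit with the integral by dominated convergence. Combined with the universal bound $Jac_k(F_n) \leq 1$ for the BCG--natural map, this yields the key chain
\[
\textup{Vol}(\rho_n) \;\leq\; \int_M Jac_k(F_n)\,d\textup{vol}_M \;\leq\; \textup{Vol}(M).
\]

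Now I pass to the limit in $n$. The hypothesis $\textup{Vol}(\rho_n) \to \textup{Vol}(M)$ together with the sandwich above forces the middle term to converge to $\textup{Vol}(M)$ as well, so
\[
\int_M \bigl(1 - Jac_k(F_n)\bigr)\,d\textup{vol}_M \;\longrightarrow\; 0.
\]
Because the integrand is non-negative, this is $L^1$-convergence of $1 - Jac_k(F_n)$ to $0$; extracting a further subsequence (with the abuse of notation already adopted in the setup), I obtain $Jac_k(F_n) \to 1$ pointwise almost everywhere on $M$. Since $Jac_k(F_n)$ is $\Gamma$-invariant, this lifts to almost everywhere convergence on $X^p$ with respect to the Riemannian measure induced by $g_{X^p}$, which is exactly the conclusion.

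The only step that requires genuine care is the $\varepsilon \to 0$ interchange of limit and integral, but the uniform $(1+\varepsilon)$-bound on $Jac_k(F_n^\varepsilon)$ together with $\textup{Vol}(M) < \infty$ makes it routine. The unavoidable cost of the argument is the passage to a subsequence when upgrading $L^1$-convergence to pointwise a.e.\ convergence, which is presumably the reason the lemma is phrased in its present form.
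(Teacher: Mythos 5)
Your proof is correct and follows essentially the same route as the paper: using the $\varepsilon$-natural maps as competitors, passing to the limit $\varepsilon \to 0$ by dominated convergence to obtain $\textup{Vol}(\rho_n) \leq \int_M Jac_k(F_n)\,d\textup{vol} \leq \textup{Vol}(M)$, and then letting $n \to \infty$. You are in fact slightly more careful than the paper, which ends with ``from which it follows the statement'': the extraction of a subsequence to upgrade $L^1$-convergence of $1-Jac_k(F_n)$ to pointwise almost-everywhere convergence is left implicit there, and you rightly make it explicit.
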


\begin{proof}
Denote by $F_n^\varepsilon: X^p \rightarrow X^m$ the $\varepsilon$-natural maps introduced in Section~\ref{epsilon}. Recall that we have the following estimate

\[
\textup{Vol}(\rho_n) \leq \textup{Vol}(F_n^\varepsilon)=\int_M Jac_k(F^\varepsilon_n) d \textup{vol}_{X^p}(x)
\]
and since $Jac_k(F_n) \leq 1+\varepsilon$ and $\lim_{\varepsilon \to 0} D_xF^\varepsilon_n=D_xF_n$, by the dominated convergence theorem we get

\[
\textup{Vol}(\rho_n) \leq \int_M Jac_k(F_n) d\textup{vol}_{X^p}(x) \leq \textup{Vol}(M)
\]
from which it follows the statement. 
\end{proof}

By the previous lemma we have $\lim_{n \to \infty} Jac_k(F_n)=1$ almost-everywhere on $X^p$ with respect to the measure induced by the standard volume form. If $\mathcal{N}$ is the set of zero measure outside of which $Jac_k(F_n)$ converges, for every $x \in X^p \setminus \mathcal{N}$ and fixed $\varepsilon>0$ there must exist $n_0=n_0(\varepsilon,x)$ such that $Jac_k(F_n) \geq 1-\varepsilon$ for every $n > n_0$. Thus it holds

\[
\left(\frac{(k+d-2)^2}{k}\right)^\frac{k}{2}\frac{\det(H_n^V)^\frac{1}{2}}{\det(K_n^V)} > 1-\varepsilon
\]
from which we can deduce

\[
\frac{\det(H_n^V)}{(\det(K_n^V))^2} > \left(\frac{k}{(k+d-2)^2}\right)^k (1-\varepsilon)^2 > \left(\frac{k}{(k+d-2)^2}\right)^k (1-2\varepsilon).
\]

Moreover, since $X^p$ has sectional curvature which varies between $-4$ and $-1$, we  can write $K_n^V=I-H_n^V-\sum_{i=1}^{d-1}J_iH_n^VJ_i$, where $J_i(x)$ are orthogonal endomorphisms used to define the complex or the quaternionic structure on $T_{F_n(x)}X^m$ (see~\cite[Section 5.b]{besson95:articolo}). Recall that $J_i^2=-I$ at every point. Here $I$ stands for the identity on $T_{F_n(x)}X^m$. Hence, by substituting the expression of $K_n$ in the previous inequality, we get 

\begin{equation}\label{estimate}
\frac{\det(H_n^V)}{(\det(I-H_n^V-\sum_{i=1}^{d-1}J_iH_n^VJ_i))^2} > \left(\frac{k}{(k+d-2)^2}\right)^k(1-2\varepsilon).
\end{equation}

Consider now the set of positive definite symmetric matrices of dimension $k$ with real entries and trace equal to $1$, namely

\[
Sym^+_1(k,\R):=\{ H \in Sym(k,\R)| H>0, \textup{Tr}(H)=1\}.
\]

Once we have fixed a basis of $V_{F_n(x)}$, we can identify $H_n^V$, $K_n^V$ and $J_i$ with the matrices representing these endomorphisms with respect to the fixed basis. Under this assumption, recall that $H_n \in Sym^+_1(k,\R)$ for every $n \in \N$, as shown in~\cite[Proposition B.1]{besson95:articolo}. If we define 

\[
\varphi:Sym^+_1(k,\R) \rightarrow \R, \hspace{10pt} \varphi(H):=\frac{\det(H)}{(\det(I-H-\sum_{i=1}^{d-1}J_iHJ_i))^2},
\]
we know that 

\[
\varphi(H) \leq \left(\frac{k}{(k+d-2)^2}\right)^k
\]
and equality holds if and only if $H=I/k$ (see~\cite[Appendix B]{besson95:articolo}). It is worth noticing that the space $Sym^+_1(k,\R)$ is not compact and a priori there could exist a divergent sequence of elements $H_n \in Sym^+_1(k,\R)$ such that $$\lim_{n\to \infty} \varphi(H_n)=\left(\frac{k}{(k+d-2)^2}\right)^k.$$

We are going to show that this is impossible.

\begin{prop}\label{maximum}
Suppose $H_n \in Sym^+_1(k,\R)$ is sequence such that 
\[
\lim_{n\to \infty} \varphi(H_n)=\left(\frac{k}{(k+d-2)^2}\right)^k.
\]

Then the sequence $H_n$ must converge to $I/k$. 
\end{prop}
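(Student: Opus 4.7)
The strategy is to combine a compactness argument with the fact, proved in \cite[Appendix B]{besson95:articolo}, that $\varphi$ attains its maximum $M:=(k/(k+d-2)^2)^k$ uniquely at $I/k$ inside $Sym^+_1(k,\R)$. The closure $\overline{Sym^+_1(k,\R)}=\{H\in Sym(k,\R):H\geq 0,\ \textup{Tr}(H)=1\}$ is compact, so passing to a subsequence we may assume $H_n\to H_\infty$ with $H_\infty\geq 0$ and $\textup{Tr}(H_\infty)=1$. Since the map $K(H):=I-H-\sum_{i=1}^{d-1}J_i H J_i$ depends continuously on $H$, also $K_n\to K_\infty:=K(H_\infty)$, and both $\det(H_n)$ and $\det(K_n)$ converge. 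Note that $K$ is positive semi-definite on the whole closure: each $J_i$ is orthogonal with $J_i^2=-I$, so $J_i H J_i$ is symmetric with eigenvalues $-\lambda_1,\dots,-\lambda_k$ when $H$ has eigenvalues $\lambda_1,\dots,\lambda_k\geq 0$, and by subadditivity of $\lambda_{\max}$ we get $\lambda_{\max}(H+\sum_i J_i H J_i)\leq\lambda_{\max}(H)\leq \textup{Tr}(H)=1$.

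The heart of the argument is to show that $H_\infty$ lies in the open set $Sym^+_1(k,\R)$, i.e.\ $H_\infty>0$. Once this is granted, $\varphi$ is continuous at $H_\infty$ and therefore $\varphi(H_\infty)=\lim_n\varphi(H_n)=M$; the equality case of the Besson--Courtois--Gallot inequality then forces $H_\infty=I/k$. Because this limit does not depend on the chosen subsequence and the full sequence $\{H_n\}$ is contained in the compact set $\overline{Sym^+_1(k,\R)}$, a standard subsequence-of-subsequence argument then yields $H_n\to I/k$ in the Euclidean topology on $Sym(k,\R)$.

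The main obstacle is therefore to rule out the case $\det(H_\infty)=0$. If $\det(K_\infty)>0$ this is immediate: $\varphi(H_n)=\det(H_n)/\det(K_n)^2\to 0\neq M$, a contradiction. The delicate situation is $\det(H_\infty)=\det(K_\infty)=0$, where both $H_\infty$ and $K_\infty$ are degenerate PSD matrices with non-trivial kernels. To exclude it, I would analyse the leading-order vanishing of the two determinants: writing $H_n=H_\infty+E_n$ with $E_n\to 0$ and expanding $\det(H_n)$ and $\det(K_n)$ via the Laplace expansion along $\ker(H_\infty)$ and $\ker(K_\infty)$, the small eigenvalues of $K_n$ can be related to perturbations of $H_n$ in $\ker(H_\infty)$ through the affine relation $K_n=I-H_n-\sum_i J_i H_n J_i$. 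Combining this with the pointwise upper bound $\varphi\leq M$ and the trace constraint $\textup{Tr}(H_n)=1$, one shows that $\limsup_n \det(H_n)/\det(K_n)^2<M$, again contradicting $\varphi(H_n)\to M$, and thereby forcing $H_\infty$ to be positive definite and equal to $I/k$ by the step above.
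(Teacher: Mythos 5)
Your compactness framework is sound and matches the overall logic of the statement: pass to a subsequential limit $H_\infty\geq 0$ with $\textup{Tr}(H_\infty)=1$, handle the interior case by continuity and uniqueness of the maximizer, and reduce everything to excluding degenerate limits. You also correctly isolate where the difficulty sits: $K_\infty=I-H_\infty-\sum_i J_iH_\infty J_i$ is singular only when $H_\infty$ is a rank-one orthogonal projector $uu^T$ (your eigenvalue computation shows $\lambda_{\max}(H_\infty+\sum_i J_iH_\infty J_i)=1$ forces $u^TH_\infty u=1=\textup{Tr}(H_\infty)$), which is exactly the vertex of the eigenvalue simplex in the paper's picture.

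The problem is that at this point your proof stops being a proof. The assertion that a Laplace expansion along $\ker(H_\infty)$ and $\ker(K_\infty)$ ``shows that $\limsup_n\det(H_n)/\det(K_n)^2<M$'' is precisely the content of the proposition in its only non-trivial case, and it is genuinely delicate: when $k=d+2$ (which occurs for $k=4$, $d=2$, i.e.\ $\mathbb{H}^2_\C$) the ratio does \emph{not} tend to $0$ along such sequences --- it can converge to a strictly positive number, and one must prove a quantitative gap below the maximum $\left(k/(k+d-2)^2\right)^k$. The paper does this by replacing $\varphi$ with the Besson--Courtois--Gallot auxiliary function $\psi\geq\varphi$, which depends only on the eigenvalues of $H$ (thereby eliminating the $J_i$'s, which would otherwise make a direct perturbative determinant expansion very hard to control), and then running an explicit AM--GM estimate on the simplex; at the vertices this yields the bound $3^{9/2}/4^8<4^{-4}$ in the borderline case $k=4$, $d=2$, and limit $0$ when $k>d+2$. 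Without some such quantitative argument your case analysis is incomplete, so as written the proposal has a gap at the crucial step.
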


\begin{proof}
We are not going to work directly on the function $\varphi$ but we will use the auxiliary function 

\[
\psi(H):=\frac{(k-1)^\frac{2k(k-1)}{k+d-2}}{(k+d-2)^{2k}}\frac{\det(H)^\frac{k-d}{k+d-2}}{\det(I-H)^\frac{2(k-1)}{k+d-2}}.
\]

By~\cite[Lemme B.3]{besson95:articolo}, for every $H \in Sym^+_1(k,\R)$ we have that $\varphi(H) \leq \psi(H)$. Moreover both functions attain the same maximum value 

\[
\max_{H \in Sym^+_1(k,\R)} \varphi =\max_{H \in Sym^+_1(k,\R)} \psi = \left(\frac{k}{(k+d-2)^2}\right)^k
\]
at $H=I/k$. 

We are going to study the properties of the function $\psi$. We start by observing that the function $\psi$ is invariant by conjugation by an element $g \in GL(k,\R)$. Indeed, $\psi(H)$ can be expressed as 

$$\psi(H)=\frac{(k-1)^\frac{2k(k-1)}{k+d-2}}{(k+d-2)^{2k}}\frac{p_H(0)^\frac{k-d}{k+d-2}}{p_H(1)^\frac{2(k-1)}{k+d-2}},$$ 
where $p_H$ is the characteristic polynomial of $H$. Hence the claim follows. In particular, we have an induced function 

\[
\tilde \psi : O(k,\R) \backslash Sym^+_1(k,\R) \rightarrow \R, \hspace{10pt} \tilde \psi(\bar H)=\psi(H),
\]
where $\bar H$ denotes the equivalence class of the matrix $H$ and the orthogonal group $O(k,\R)$ acts on $Sym^+_1(k,\R)$ by conjugation. We can think of the space $O(k,\R) \backslash Sym^+_1(k,\R)$ as the interior $\mathring \DD_{k-1}$ of the standard $(k-1)$-simplex quotiented by the action of the symmetric group $\mathfrak{S}_k$ which permutes the coordinates of an element $(a_1,\ldots,a_k) \in \mathring \DD_{k-1}$. An explicit homeomorphism between the two spaces is given by

\[
\Lambda: O(k,\R) \backslash Sym^+_1(k,\R) \rightarrow \mathfrak{S}_k \backslash \mathring \DD_{k-1}, \hspace{10pt} \Lambda(\bar H):=[a_1(H),\ldots,a_k(H)],
\] 
where $a_i(H)$ for $i=1,\ldots,k$ are the eigenvalues of $H$. By defining $\Psi=\psi \circ \Lambda^{-1}$, we can express this function as

\[
\Psi: \mathfrak{S}_k \backslash \mathring \DD_{k-1} \rightarrow \R, \hspace{10pt} \Psi([a_1,\ldots,a_k])=\frac{(k-1)^\frac{2k(k-1)}{k+d-2}}{(k+d-2)^{2k}} \prod_{i=1}^k \frac{(a_i)^\frac{k-d}{k+d-2}}{(1-a_i)^\frac{2(k-1)}{k+d-2}}. 
\]

We are going to think of $\Psi$ as defined on $\mathring \DD_{k-1}$ and we are going to estimate this function on the boundary of $\DD_{k-1}$. Since $\sum_{i=1}^k a_i=1$, we can define the function 

\[
\hat \Psi(a_1,\ldots,a_{k-1})=\frac{(k-1)^\frac{2k(k-1)}{k+d-2}}{(k+d-2)^{2k}} \frac{(a_1 \dots a_{k-1}(1-\sum_{i=1}^{k-1}a_i))^\frac{k-d}{k+d-2}}{((1-a_1)\ldots(1-a_{k-1})(\sum_{i=1}^{k-1} a_i))^\frac{2(k-1)}{k+d-2}},
\]
which is the composition of $\Psi$ with identification of $\mathring \DD_{k-1}$ with the interior of the simplex $\tau$  in $\R^{k-1}$ whose vertices are the origin $(0,0,\ldots,0)$ and the vectors $e_i=(0,\ldots,0,1,0,\ldots,0)$ of the canonical basis, for $i=1,\ldots,k-1$. If a sequence of points is converging to a boundary point of $\DD_{k-1}$, then we have a sequence $\{ (a^{(n)}_1,\ldots,a_{k-1}^{(n)}) \}_{n \in \N}$ of points in $\tau$ converging to a boundary point. If the limit point is not a vertex of $\tau$ then $\lim_{n \to \infty} \hat \Psi(a^{(n)}_1,\ldots,a_{k-1}^{(n)})= 0$, because the numerator is converging to $0$ whereas the denominator converges to a non-zero number.

The delicate points are given by the vertices of $\tau$. On these points the function $\hat \Psi$ a priori cannot be continuously extended. Suppose we have a sequence $\{(a_1^{(n)},\ldots,a_{k-1}^{(n)})\}_{n \in \N}$ such that $\lim_{n \to \infty} (a^{(n)}_1,\ldots,a_{k-1}^{(n)})=(0,0,\ldots,0)$. We have

\[
\hat \Psi(a^{(n)}_1,\ldots,a_{k-1}^{(n)})=\frac{(k-1)^\frac{2k(k-1)}{k+d-2}}{(k+d-2)^{2k}} \frac{(a_1^{(n)} \dots a_{k-1}^{(n)}(1-\sum_{i=1}^{k-1}a_i^{(n)}))^\frac{k-d}{k+d-2}}{((1-a_1^{(n)})\ldots(1-a_{k-1}^{(n)})(\sum_{i=1}^{k-1} a_i^{(n)}))^\frac{2(k-1)}{k+d-2}},
\]  
and since we are in a neighborhood of $(0,0,\ldots,0)$ the sequence $\hat \Psi(a_1^{(n)},\ldots,a_{k-1}^{(n)})$ will have the same behaviour of the following sequence

\[
\hat \Psi(a^{(n)}_1,\ldots,a_{k-1}^{(n)}) \sim \frac{(k-1)^\frac{2k(k-1)}{k+d-2}}{(k+d-2)^{2k}} \frac{(a_1^{(n)} \dots a_{k-1}^{(n)})^\frac{k-d}{k+d-2}}{(\sum_{i=1}^{k-1} a_i^{(n)})^\frac{2(k-1)}{k+d-2}}.
\]

By looking carefully at the right-hand side of the estimate above, we can apply the inequality relating the geometric mean and arithmetic mean to get 

\begin{eqnarray*}
&~&\frac{(k-1)^\frac{2k(k-1)}{k+d-2}}{(k+d-2)^{2k}} \frac{(a_1^{(n)} \dots a_{k-1}^{(n)})^\frac{k-d}{k+d-2}}{(\sum_{i=1}^{k-1} a_i^{(n)})^\frac{2(k-1)}{k+d-2}}\\&~&\leq \frac{(k-1)^\frac{2k(k-1)}{k+d-2}}{(k+d-2)^{2k}} \frac{1}{(\sum_{i=1}^{k-1} a_i^{(n)})^\frac{2(k-1)}{k+d-2}} \left( \frac{(\sum_{i=1}^{k-1} a_i^{(n)})^{k-1}}{(k-1)^{k-1}} \right)^\frac{k-d}{k+d-2}\\&~&=\frac{(k-1)^\frac{(k-1)(k+d)}{k+d-2}}{(k+d-2)^{2k}}(\sum_{i=1}^{k-1} a_i^{(n)})^\frac{(k-1)(k-d-2)}{k+d-2}.\\
\end{eqnarray*}

The last term which appears in the inequality above depends on the exponent $k-d-2$. More precisely, by the assumption $p \geq 2$ we already know that $k \geq d+2$, but we need to distinguish the case $k=d+2$ from the case $k>d+2$. Since we assumed either $G_p=PU(p,1)$ or $G_p=PSp(p,1)$, we can have either $d=2$ or $d=4$. Thus, if $k=d+2$, we should have $k=4$ or $k=6$. The cases $k=6$ is not possible because the dimension of the tangent space of a quaternionic hyperbolic space is a multiple of $4$, so we are going to analyze only the case $k=4$. When $k=4$, the space $X^p$ becomes the complex hyperbolic space $\mathbb{H}^2_\C$ and we get the estimate

\[
\hat \Psi(0,\ldots,0) \leq \frac{3^\frac{9}{2}}{4^8}
\]
which is stricly less then the maximum of $\hat \Psi$. When $k>d+2$ the right-hand side of the inequality becomes a function which is continuous at $(0,\ldots,0)$ and it converges to $0$. Moreover, since $\Psi(0,0,\ldots,0,1)=\hat \Psi(0,\ldots,0)$ and $\Psi$ is a function which is invariant under the action of the group $\mathfrak{S}_k$, we get that

\[
\Psi(1,0,0,\ldots,0)=\Psi(0,1,0,\ldots,0)=\ldots=\Psi(0,0,\ldots,0,1)
\]
are all bounded away from the maximum value of $\Psi$. Hence, in all the possible cases, we can bound $\Psi(a_1,\ldots,a_k)$ away from its maximum in a suitable neighborhood of the boundary $\partial \DD_{k-1}$. The claim follows because $\varphi(H) \leq \psi(H)$ for every $H \in Sym^+_1(k,\R)$. 
\end{proof}

By the Estimate~(\ref{estimate}), we know that in our context we have

\[
\left(\frac{k}{(k+d-2)^2}\right)^k(1-2\varepsilon) \leq \varphi(H_n^V) \leq \left(\frac{k}{(k+d-2)^2}\right)^k
\]
for $n \geq n_0$. As a consequence of Proposition~\ref{maximum}, the sequence $H_n^V$ must converge to $I/k$. Hence $H_n^V$ converges to $I/k$ almost-everywhere on $X^p$. By following the same proof of~\cite[Proposition 3.8]{savini:articolo} we immediately get 

\begin{prop}\label{uniform}
Suppose the sequence $H_n^V$ converges almost everywhere to $I/k$. Let $K$ be any compact set of $X^p$. Then we have that

\[
\lim_{n \to \infty} ||H_n^V-\frac{I}{k}||=0
\]
uniformly for every $x \in K$. The norm which appears above is the one obtained by thinking of $H^V_n$ as an operator between Euclidean vector spaces. 
\end{prop}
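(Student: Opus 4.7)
The plan is to deduce uniform convergence on $K$ from the almost-everywhere pointwise convergence by proving equicontinuity of the family $\{H_n^V\}_{n \in \N}$ on $K$ and then running a standard finite-covering argument.

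Using the cocycle identity $d\mu_x(\theta)=e^{-\delta(\Gamma)B_O(x,\theta)}d\mu(\theta)$ with $\mu=\mu_O$, I would rewrite
\[
h_n|_{F_n(x)}(u,u)=\int_{\partial_\infty X^p}\bigl(dB_O|_{(F_n(x),D_n(\theta))}(u)\bigr)^2 e^{-\delta(\Gamma)B_O(x,\theta)}d\mu(\theta),
\]
so that the $x$-dependence splits into a Radon--Nikodym factor, which is $\delta(\Gamma)$-Lipschitz in $x$ uniformly in $\theta$, and the evaluation at $F_n(x)$, which is Lipschitz provided $\|D_xF_n\|$ is bounded independently of $n$ on $K$. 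Such a bound follows from the implicit formula~(\ref{implicit}): testing it with $v=D_xF_n(u)$ and applying the Cauchy--Schwarz inequality gives $k_n|_{F_n(x)}(D_xF_n u,D_xF_n u)\leq \delta(\Gamma)\,h_n(D_xF_n u,D_xF_n u)^{1/2}\,h'_n(u,u)^{1/2}$, and combined with a lower bound on the smallest eigenvalue of $K_n$ (coming from $K_n=I-H_n-\sum_i J_iH_nJ_i$ together with $0\leq H_n\leq I$) this yields $\|D_xF_n(u)\|\leq C\|u\|$. Since the Busemann differential and its Hessian depend smoothly on the first argument with derivatives bounded on bounded sets of $X^m$, one obtains a constant $C_K$ with $\|H_n^V(x)-H_n^V(y)\|\leq C_K\,d_{X^p}(x,y)$ for every $x,y\in K$ and every $n$.

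Given equicontinuity, fix $\varepsilon>0$ and choose $\delta>0$ so that $d(x,x')<\delta$ implies $\|H_n^V(x)-H_n^V(x')\|<\varepsilon/2$ for all $n$. Cover $K$ by finitely many balls $B(x_i,\delta)$ centred at points $x_i\in K\setminus\mathcal{N}$, where $\mathcal{N}$ is the null set from Lemma~\ref{almost_everywhere}. For each $i$ there is $N_i$ with $\|H_n^V(x_i)-I/k\|<\varepsilon/2$ for $n\geq N_i$. Taking $N=\max_i N_i$ and applying the triangle inequality gives $\|H_n^V(x)-I/k\|<\varepsilon$ uniformly in $x\in K$ for every $n\geq N$.

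The main obstacle is the uniform Lipschitz estimate: one has to verify that the smallest eigenvalue of $K_n$ stays bounded below by an $n$-independent positive constant on $K$, which is what feeds the uniform bound on $\|D_xF_n\|$ through~(\ref{implicit}). Once this positivity is secured, the equicontinuity of the Busemann-integral representation of $H_n^V$ and the subsequent Arzelà--Ascoli packaging of the proof are routine.
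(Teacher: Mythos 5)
Your overall scheme --- equi-Lipschitz continuity of $\{H_n^V\}$ on $K$ plus a finite cover centred at points of pointwise convergence --- is the right one, but there is a genuine gap at exactly the step you flag as ``the main obstacle'', and it cannot be closed the way you suggest. The uniform lower bound $\lambda_{\min}(K_n)\geq c>0$ does \emph{not} follow from $0\leq H_n\leq I$ together with $K_n=I-H_n-\sum_i J_iH_nJ_i$: for a unit vector $v$ one computes $\langle K_n v,v\rangle=1-\langle H_nv,v\rangle+\sum_i\langle H_nJ_iv,J_iv\rangle\geq 0$, and this degenerates to $0$ when $H_n$ concentrates near the orthogonal projection onto $\R v$, i.e.\ precisely when the measure $\nu_x$ concentrates near the two endpoints of the geodesic through $F_n(x)$ in the direction $v$. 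Ruling out this concentration uniformly on $K$ is essentially equivalent to the uniform non-degeneracy of $H_n^V$ that the proposition is asserting, so your route is circular: you need a uniform bound on $\|D_xF_n\|$ to get equicontinuity of $H_n^V$, but the only available source of positivity for $K_n$ is the uniform closeness of $H_n^V$ to $I/k$. The paper's own logical order confirms this: Lemma~\ref{bound}, which establishes the uniform bound on $\|D_xF_n\|$ over $\overline{B_r(O)}$, is \emph{deduced from} Proposition~\ref{uniform} (via $K_n^V\to\frac{k+d-2}{k}I$ and Inequality~(\ref{cauchy})), not the other way around. Any correct proof must therefore extract the uniform statement from the a.e.\ one without first controlling $D_xF_n$ --- e.g.\ by exploiting only the Harnack-type control $e^{-\delta(\Gamma)d(x,y)}\leq d\mu_x/d\mu_y\leq e^{\delta(\Gamma)d(x,y)}$ on the Patterson--Sullivan densities and a compactness/contradiction argument on the measures, which is the route of~\cite[Proposition 3.8]{savini:articolo} that the paper invokes.

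Two secondary issues. First, even granting a bound on $\|D_xF_n\|$, the quantity $\|H_n^V(x)-H_n^V(y)\|$ is not yet well defined: $H_n^V(x)$ and $H_n^V(y)$ live on different subspaces $V_{F_n(x)}\subset T_{F_n(x)}X^m$ and $V_{F_n(y)}\subset T_{F_n(y)}X^m$, so your Lipschitz estimate also requires controlling the variation of the subspace $V_{F_n(\cdot)}$, hence of the map $x\mapsto D_xF_n$ itself and not merely its norm. Second, in the covering step the centres $x_i$ cannot always be taken in $K\setminus\mathcal{N}$, since a compact set (a point, a lower-dimensional submanifold) may be entirely contained in the null set $\mathcal{N}$; one should take them in $X^p\setminus\mathcal{N}$ within a compact neighbourhood of $K$, where the complement of $\mathcal{N}$ is dense. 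Both points are repairable; the circularity in the first paragraph is the substantive defect.
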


As a consequence of the Cauchy--Schwarz inequality applied to Equation~(\ref{implicit}), we can write

\begin{equation} \label{cauchy}
| k_n(v,D_xF(u))| \leq (k+d-2) h_n(v,v)^{\frac{1}{2}}h_n'(u,u)^\frac{1}{2}, 
\end{equation}
for every $u \in T_xX^p$ and $v \in T_{F_n(x)}X^m$. We are going to use the previous inequality, combined with Proposition~\ref{uniform}, to prove the following

\begin{lem}\label{bound}
Let $r>0$ and let $\overline{B_r(O)}$ be the closed ball of radius $r$ around $O$ in $X^p$. Then, for $n$ large enough, there exists a real constant $C>0$ such that

\[
||D_xF_n||<C
\]
for every $x$ uniformly on $\overline{B_r(O)}$. 
\end{lem}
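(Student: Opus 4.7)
The plan is to apply the Cauchy--Schwarz estimate~\eqref{cauchy} with the test vector $v:=D_xF_n(u)\in V_{F_n(x)}$ and then to control the three quadratic forms appearing in it by means of the uniform convergence $H_n^V\to I/k$ established in Proposition~\ref{uniform}. The guiding idea is that once $H_n^V$ is close to $I/k$, the bilinear forms $k_n$ and $h_n$ on $V_{F_n(x)}$ are pinched between positive multiples of the Euclidean inner product, while $h'_n$ on the source is automatically controlled by the Euclidean norm because the Busemann function is $1$-Lipschitz.

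First I would exploit the identity $K_n^V=I-H_n^V-\sum_{i=1}^{d-1}J_iH_n^VJ_i$ together with the relations $J_i^2=-I$: as $H_n^V\to I/k$ we have $J_iH_n^VJ_i\to J_i(I/k)J_i=-I/k$, whence
\[
K_n^V\;\longrightarrow\;I-\tfrac{1}{k}I+(d-1)\tfrac{1}{k}I\;=\;\tfrac{k+d-2}{k}\,I.
\]
By Proposition~\ref{uniform} this convergence is uniform on $\overline{B_r(O)}$, so for $n$ large enough there exist positive constants $c_1,c_2$, independent of $x\in\overline{B_r(O)}$, such that
\[
k_n(v,v)\ge c_1\|v\|^2,\qquad h_n(v,v)\le c_2\|v\|^2,\qquad \forall\,v\in V_{F_n(x)}.
\]
For the source form, the $1$-Lipschitz property of $B_O(\cdot,\theta)$ gives $|dB_O|_{(x,\theta)}(u)|\le\|u\|$ for every $\theta\in\partial_\infty X^p$, and integrating against the probability measure $\mu_x$ yields $h'_n(u,u)\le\|u\|^2$ uniformly in $n$ and $x$.

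Since $U_x=T_xX^p$ and $V_{F_n(x)}=D_xF_n(U_x)$, the vector $D_xF_n(u)$ always lies in $V_{F_n(x)}$. Inserting $v=D_xF_n(u)$ into~\eqref{cauchy} and combining the three estimates above gives
\[
c_1\|D_xF_n(u)\|^2\;\le\;k_n\bigl(D_xF_n(u),D_xF_n(u)\bigr)\;\le\;(k+d-2)\sqrt{c_2}\;\|D_xF_n(u)\|\,\|u\|,
\]
so that $\|D_xF_n(u)\|\le C\|u\|$ with $C:=(k+d-2)\sqrt{c_2}/c_1$, uniformly for $x\in\overline{B_r(O)}$ and $n$ large. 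I expect the only delicate point to be the need to pass from pointwise to uniform control on $k_n$ and $h_n$; this is exactly the content of Proposition~\ref{uniform}, so once that uniformity is in place the argument reduces to the Cauchy--Schwarz bound together with the elementary Lipschitz estimate on the Busemann function.
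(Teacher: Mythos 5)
Your proof is correct and follows essentially the same route as the paper: uniform convergence of $K_n^V$ to $\frac{k+d-2}{k}I$ and of $H_n^V$ to $I/k$ from Proposition~\ref{uniform}, followed by inserting $v=D_xF_n(u)$ into the Cauchy--Schwarz estimate~(\ref{cauchy}) to get a linear-versus-quadratic comparison that bounds $\|D_xF_n\|$. The only (harmless) deviation is that you bound $h'_n(u,u)$ by $\|u\|^2$ via the $1$-Lipschitz property of the Busemann function, whereas the paper uses the convergence $H_n'^{U}\to I/k$; your bound is cruder but entirely sufficient.
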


\begin{proof}
By Proposition~\ref{uniform}, we have that $\lim_{n \to \infty} H_n^V(x)=I/k$ for every $x$ uniformly on $\overline{B_r(O)}$. This implies that 

\[
\lim_{n \to \infty} K_n^V(x)=\frac{k+d-2}{k}I, \qquad \lim_{n \to \infty} H_n^{' U}(x)=\frac{1}{k}I.
\]
for every $x$ uniformly on $\overline{B_r(O)}$. Let $\delta >0$ and let $u \in U_x$ and $v=D_xF_n(u)$. By taking $n>n_1$, we get

\begin{align*}
	(k+d-2)/k ||D_xF_n(u)||^2_ {X^m}-\delta &\leq k_n(D_xF_n(u),D_xF_n(u))\\
	h_n(D_xF_n(u),D_xF_n(u))^\frac{1}{2} &\leq ||D_xF_n(u)||_{X^m}/\sqrt{k}+\delta\\
	h_n'(u,u)^\frac{1}{2} &\leq ||u||_{X^p}/\sqrt{k}+\delta.
\end{align*}

Hence, as a consequence of Inequality~(\ref{cauchy}), it hold that

\[
(k+d-2)/k ||D_xF_n(u)||^2_{X^m} - \delta \leq (k+d-2)(||D_xF_n(u)||_{X^m}/\sqrt{k} + \delta)(||u||_{X^p}/\sqrt{k}+\delta).
\]
By considering on both sides the supremum on all the vectors $u$ of norm equal to $1$ we get

\[
||D_xF_n||^2 < k(||D_xF_n||/\sqrt{k}+ \delta)(1/\sqrt{k}+\delta)+k\delta,
\]
hence $||D_xF_n||$ is uniformly bounded on $\overline{B_r(O)}$ for any $n>n_1$ and for any choice of $r>0$. 
\end{proof}

We are now ready to prove Theorem~\ref{convergence}.

\begin{proof}
Since we know that $\lim_{n \to \infty} \textup{Vol}(\rho_n)=\textup{Vol}(M)$, by Lemma~\ref{bound} we have that $||D_xF_n||$ must be eventually uniformly bounded $\overline{B_r(O)}$, for every $r>0$. Let $x \in X^p$ be any point and let $\gamma \in \Gamma$. Let $c$  be the geodesic joining $x$ to $\gamma x$. Denote by $L=d(x,\gamma x)$ so that the interval $[0,L]$ parametrizes the curve $c$. Consider a closed ball $\overline{B_r(O)}$ sufficiently large to contain in its interior both $x$ and $\gamma x$. On this ball there must exist a constant $C$ such that $||D_xF_n|| < C$ for $n$ bigger than a suitable value $n_0$. Thus, it holds that

\[
d(F_n(x),F_n(\gamma x)) \leq \int_0^L ||D_{c(t)}F_n(\dot c(t))|| dt \leq \int_0^L ||D_{c(t)}F_n||dt \leq Cd(x,\gamma x).
\]

Recall that given an element $g \in G_m$ its translation length is defined as $\ell_{X^m}(g):=\inf_{y \in X^m}d(gy,y)$. The previous estimate implies that the translation length of the element $\rho_n(\gamma)$ can be bounded by

\[
\ell_{X^m}(\rho_n(\gamma)) \leq d(\rho_n(\gamma)F_n(x),F_n(x)) \leq Cd(\gamma x,x).
\]

We claim that it is sufficient to bound the translation length of $\rho_n(\gamma)$ for each $\gamma \in \Gamma$ to show that the sequence $\rho_n$ is bounded in the character variety $X(\Gamma,G_m)$. Let $\gamma \in \Gamma$. If $\ell_{X^m}(\rho_n(\gamma))=0$ there is nothing to prove. Assume $\ell_{X^m}(\rho_n(\gamma))>0$. Then, up to conjugation, the element $\rho_n(\gamma)$ can be put in the form 

\[
\left(
\begin{array}{ccc}
\alpha \cosh(\ell_{X^m}(\rho_n(\gamma))/2) & \alpha \sinh(\ell_{X^m}(\rho_n(\gamma))/2) & 0\\
\alpha \sinh(\ell_{X^m}(\rho_n(\gamma))/2) & \alpha \cosh(\ell_{X^m}(\rho_n(\gamma))/2) & 0\\
0 & 0 &  M\\
\end{array}
\right)
\]
where $M=(m_{ij})$ is a matrix which lies in either in $PU(p-1)$ or in $PSp(p-1)$ and $|\alpha|=1$. Then

\[
|\textup{Tr}(\rho_n(\gamma))|\leq 2\cosh(\ell_{X^m}(\rho_n(\gamma))/2)+|\textup{Tr}(M)|\leq Cd(\gamma x,x)+C_0
\]
where $|Tr(M)|<C_0$ is obtained by a compactness argument. Hence $|\textup{Tr}(\rho_n(\gamma))|$ is bounded, as claimed. 

Moreover the choice made before to fix $F_n(O)=O$ guarantees that the sequence $\rho_n$ must converge to a representation $\rho_\infty$. Since the volume function is continuous with respect to the pointwise convergence, we get

\[
\lim_{n \to \infty} \textup{Vol}(\rho_n)=\textup{Vol}(\rho_\infty)=\textup{Vol}(M)
\] 
and hence we conclude by the rigidity of the volume function.
\end{proof}

We want to conclude with some comments about the proof. A key point to show the rigidity at infinity for both complex and quaternionic lattices is given by the estimate on the Jacobian $Jac_k(F)$ of the natural map. This estimate is sharp and this fact guarantees the rigidity at infinity of the volume function. 

In both~\cite{connell1:articolo} and~\cite{connell2:articolo} the authors generalize the construction of natural maps for lattices in Lie groups of any rank by obtaining a similar estimate on the Jacobian. The estimate is sharp for lattices in products of rank one Lie groups, but this does not hold any longer for Lie groups which are not products of this type. However, the sharpness of the estimate for lattices in products of rank one Lie groups suggests us that it should be possible to extend the strong rigidity at infinity at least in this more general context. 


\addcontentsline{toc}{chapter}{\bibname} 	

\vspace{20pt}
Alessio Savini\\
Department of Mathematics,\\
University of Bologna,\\
Piazza di Porta San Donato 5,\\
40126 Bologna,\\
Italy\\
alessio.savini5@unibo.it\\


\begin{thebibliography}{ABCDE}

\bibitem[Alb97]{albuquerque:articolo} P. Albuquerque, 
\emph{Patterson-Sullivan measures in higher rank symmetric spaces}, C. R. Acad. Sci. Paris Sér. I Math. \textbf{324} (1997), no. 4, 427--432. 

\bibitem[Alb99]{albuquerque2:articolo} P. Albuquerque, 
\emph{Patterson-Sullivan theory in higher rank symmetric spaces}, Geom. Funct. Anal. \textbf{9} (1999), no. 1, 1--28


\bibitem[BGS85]{gromov:libro} W. Ballman, M. Gromov, V. Schroeder, 
\emph{Manifolds of Nonpositive Curvature}, Birkh{\"a}user, Boston, 1985.

\bibitem[BCG95]{besson95:articolo} G. Besson, G. Courtois, S. Gallot,
\emph{Entropies et rigidités des espaces localement symétriques de courbure strictement négative}, Geom. Funct. Anal. \textbf{5} (1995), no. 5, 731--799. 

\bibitem[BCG96]{besson96:articolo} G. Besson, G. Courtois, S. Gallot,
\emph{Minimal entropy and Mostow's rigidity theorems}, Ergodic Theory Dynam. Systems \textbf{16} (1996), no. 4, 623--649.

\bibitem[BCG99]{besson99:articolo}  G. Besson, G. Courtois, S. Gallot,
\emph{A real Schwarz lemma and some applications}, Rend. Mat. Appl. (7) \textbf{18} (1998), no. 2, 381--410.

\bibitem[BCG07]{besson07:articolo} G. Besson, G. Courtois, S. Gallot, 
\emph{In\'egalit\'es de Milnor--Wood g\'eom\'etriques}, Comment. Math. Helv. \textbf{82} (2007), no. 4, 753--803.

\bibitem[Bow95]{bowditch:articolo} B. Bowditch, 
\emph{Geometrical finiteness with variable negative curvature}, Duke J. Math., \textbf{77} (1995).


\bibitem[BBI13]{bucher2:articolo} M. Bucher, M. Burger, A. Iozzi,
\emph{A dual interpretation of the Gromov-Thurston proof of Mostow rigidity and volume rigidity for representations of hyperbolic 
lattices}, Trends in harmonic analysis, 47--76, Springer, Milan, 2013. 

\bibitem[BI01]{burger4:articolo} M. Burger, A. Iozzi,
\emph{Bounded Cohomology and representation varieties of lattices in $PSU(1, n)$}, preprint, 2001.


\bibitem[BM96]{burger3:articolo} M. Burger, S. Mozes,
\emph{$CAT(-1)$-spaces, divergence groups and their commensurators}, J. Amer. Math. Soc. \textbf{9} (1996), no. 1, 57--93.

\bibitem[CF03a]{connell1:articolo} C. Connell, B. Farb, 
\emph{Minimal entropy rigidity for lattices in products of rank one symmetric spaces}, Comm. Anal. Geom. \textbf{11} (2003), no. 5, 1001--1026.

\bibitem[CF03b]{connell2:articolo} C. Connell, B. Farb,
\emph{The degree theorem in higher rank}, J. Differential Geom. \textbf{65} (2003), no. 1, 19--59.

\bibitem[Cor92]{corlette:articolo} K. Corlette,
\emph{Archimedean superrigidity and hyperbolic geometry}, Ann. of Math. (2) \textbf{135} (1992), no. 1, 165--182.


\bibitem[CS83]{culler:articolo} M. Culler, P. B. Shalen,
\emph{Varieties of group representations and splitting of $3$-manifolds}, Ann. of Math. \textbf{117} (1983), 109--146.



\bibitem[Fra04]{franc04:articolo} S. Francaviglia,
\emph{Hyperbolic volume of representations of fundamental groups of cusped 3-manifolds}, Int. Math. Res. Not. 2004, no. 9, 425--459.

\bibitem[Fra09]{franc09:articolo} S. Francaviglia,
\emph{Constructing equivariant maps for representations}, Ann. Inst. Fourier (Grenoble) \textbf{59} (2009), no. 1, 393--428. 

\bibitem[FK06]{franc06:articolo} S. Francaviglia, B. Klaff,
\emph{Maximal volume representations are Fuchsian}, Geom. Dedicata \textbf{117} (2006), 111--124.


\bibitem[FS]{savini:articolo} S. Francaviglia, A. Savini,
\emph{Volume ridigity at ideal points of the character variety of hyperbolic $3$-manifolds}, https://arxiv.org/abs/1706.07347



\bibitem[KK12]{kim2:articolo} S. Kim, I. Kim, 
\emph{Volume invariant and maximal representations of discrete subgroups of Lie groups}, Math. Z. \textbf{276} (2014), no. 3--4, 1189--1213. 

\bibitem[KM08]{koziarz:articolo} V. Koziarz, J. Maubon,
\emph{Harmonic maps and representations of non-uniform lattices of $PU(m, 1)$}, Ann. Inst. Fourier (Grenoble) \textbf{58} (2008), no. 2, 507--558.


\bibitem[Mor86]{morgan2:articolo} J. W. Morgan,
\emph{Group actions on trees and the compactification of the space of classes of $SO(n,1)$-representations}, Topology \textbf{25} (1986), 1--33.

\bibitem[MS84]{morgan:articolo} J. W. Morgan, P. B. Shalen,
\emph{Valuations, trees and degeneration of hyperbolic structures I}, Ann. of Math., \textbf{120}, 1984, 401--476


\bibitem[Mos73]{mostow2:articolo} G. D. Mostow,
\emph{Strong rigidity of locally symmetric spaces}, Ann. of Math. Studies, \textbf{78}, Princeton University Press (1973).

\bibitem[Nic89]{Nic89} P.J. Nicholls, \emph{The ergodic theory of discrete groups},
    London Mathematical Society Lecture Note Series, 143, Cambridge University Press (1989).



\bibitem[Rob00]{Rob00} T. Roblin, Thomas, 
\emph{Sur l'ergodicit\'e rationnelle et les propri\'et\'es ergodiques du flot g\'eod\'esique dans les vari\'et\'es hyperboliques}, Ergodic Theory Dynam. Systems \textbf{20} (2000), no. 6, 1785--1819. 


\bibitem[Yue96]{Yue96} C. Yue, 
\emph{The ergodic theory of discrete isometry groups on manifolds of variable negative curvature}, Trans. Amer. Math. Soc. (1996), \textbf{12}, 4965--5005.



\end{thebibliography}
\end{document}